\documentclass[12pt, reqno]{amsart}

\author[P.~Leonetti]{Paolo Leonetti}
\address{Universit\`a ``Luigi Bocconi''\\Department of Statistics\\Milan, Italy}
\email{leonetti.paolo@gmail.com}
\urladdr{\url{https://sites.google.com/site/leonettipaolo/}}

\keywords{Series, subseries, achievement set, rate of convergence.}
\subjclass[2010]{Primary: 40A35. Secondary: 26A12.}

\usepackage{stmaryrd}
\usepackage{amsmath}
\usepackage{amssymb}
\usepackage{amsthm}
\usepackage[left=3.5cm, right=3.5cm, paperheight=11.8in]{geometry}
\usepackage{hyperref}
\usepackage{fancyhdr}
\usepackage{enumitem}
\usepackage{comment}
\usepackage{nicefrac}
\usepackage{mathrsfs}
\usepackage{url}
\usepackage{graphicx}
\usepackage[utf8]{inputenc}
\usepackage{cancel}

\AtBeginDocument{%
   \def\MR#1{}
}

\theoremstyle{theorem}
\newtheorem{thm}{Theorem}

\newtheorem{cor}[thm]{Corollary}
\newtheorem{question}[thm]{Question}
\theoremstyle{question}

\theoremstyle{definition}

\pagestyle{fancy}
\fancyhf{}
\fancyhead[CO]{\textsc{Convergence Rates of Subseries}}
\fancyhead[CE]{\textsc{Paolo Leonetti}}
\fancyhead[RO,LE]{\thepage}

\setlength{\headheight}{12pt}

\hypersetup{
    pdftitle={Convergence Rates of Subseries},
    pdfauthor={},
    pdfmenubar=false,
    pdffitwindow=true,
    pdfstartview=FitH,
    colorlinks=true,
    linkcolor=blue,
    citecolor=green,
    urlcolor=cyan
}

\uchyph=0

\providecommand{\MR}[1]{}

\providecommand{\MR}{\relax\ifhmode\unskip\space\fi MR }

\providecommand{\href}[2]{#2}

\begin{document}

\title{Convergence Rates of Subseries}

\maketitle
\thispagestyle{empty}

\begin{abstract} 
Let $(x_n)$ be a positive real sequence decreasing to $0$ such that the series $\sum_n x_n$ is divergent and $\liminf_{n} x_{n+1}/x_n>1/2$. 
We show that there exists a constant $\theta \in (0,1)$ such that, for each $\ell>0$, there is a subsequence $(x_{n_k})$ for which $\sum_k x_{n_k}=\ell$ and $x_{n_k}=O(\theta^k)$.
\end{abstract}

\section{Introduction.} 

Given a real sequence $x=(x_n)$ with divergent series $\sum_n x_n$, let $\mathcal{A}(x)$ be its \emph{achievement set}, that is, the set of sums of convergent subseries 
$$
\textstyle \mathcal{A}(x)=\left\{\ell \in \mathbf{R}: \sum_{k\ge 1} x_{n_k}=\ell \text{ for some subsequence }(x_{n_k})\right\}.
$$
In this regard, it is known that ``almost all'' subseries (both in the measure-theoretic sense and the categorical sense) diverge, see \cite{MR0009997, MR0493027, MR0179507}. Related results in the context of filter convergence have been recently proved in \cite{BPW2018, Leo17, LMM}.
 
There is a large literature on achievement sets; see, e.g., \cite{MR978930, MR0005391, MR2812282, MR3418208, MR1750775}. 
Achievement sets are also known as \emph{subsum sets}: it is known, for instance, that if $(x_n)$ is a sequence whose terms form a conditionally convergent series then $\mathcal{A}(x)=\mathbf{R}$; see \cite[Corollary 1.4]{MR2812282}. 
In addition, if $(x_n)$ is a sequence converging to $0$ then $\mathcal{A}(x)$ is one of the following three possibilities: a finite union of (nontrivial) compact intervals, a Cantor set, or a ``symmetric Cantorval'' (that is, a Cantor-like set with both trivial and nontrivial components); see \cite[p. 870]{MR3418208}. 
Motivations for the investigation of these sets come from, among other areas, measure theory and fractal geometry; see \cite{MR745282} and \cite{MR1186131}, respectively. Of special interest have been specific subseries 
of the harmonic series $\sum_n \frac{1}{n}$; see, e.g., \cite{MR528792,  MR0289994, MR2416253, MR0380183, MR508230}.

Motivated by the study of achievement sets in the context of Banach spaces \cite{MR3730451}, Jacek Marchwicki posed the following question during the open problem session of the 46th Winter School in Abstract Analysis (Czech Republic, 2018):
\begin{question}\label{question1}
Fix a real $\ell>0$. Does there exist a set of distinct primes $\mathcal{P}$ such that
$$
\sum_{p \in \mathcal{P}} \frac{1}{p} = \ell \,\,\,\text{ and }\,\,\,\sum_{p \in \mathcal{P}} \frac{1}{\sqrt{p}}<\infty\,?
$$
\end{question}
\noindent Informally, the question above aims to establish whether there is a relationship between numbers $\ell$ in the achievement set $\mathcal{A}(x)$, where each $x_n$ is the reciprocal of the $n$th prime, and the growth rate of the subsequences $(x_{n_k})$ for which $\sum_{k} x_{n_k}=\ell$.

In this respect, we are going to prove that there exists a sequence $(x_{n_k})$ of this type that decays at least exponentially. 
As a consequence of our main result (Theorem \ref{th:main} below), we give an affirmative answer to Question \ref{question1}, and more, by proving the following corollary to it.
\begin{cor}\label{cor0}
For each $\ell>0$ and for each $\theta>0$, there exists an increasing sequence of primes $(p_k)$ and a constant $c>0$ such that 
$$\sum_k \frac{1}{p_k}=\ell\,\,\,\,\text{ and }\,\,\,\,p_k \ge c\,\theta^k\,\,\text{ for all }\,k.$$ 
In particular, $\sum_k\frac{1}{p_k^\alpha}<\infty$ for all $\alpha>0$.
\end{cor}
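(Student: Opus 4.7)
The plan is to apply Theorem~\ref{th:main} to $x_n := 1/p_n$, where $p_n$ is the $n$th prime, and then to tune its construction so as to accommodate an arbitrary growth rate. First, I verify the hypotheses: $(x_n)$ is positive and monotonically decreasing to $0$; the series $\sum_n 1/p_n$ diverges by Euler's classical theorem on reciprocals of primes; and because $p_{n+1}/p_n \to 1$ (which follows from the Prime Number Theorem, or from any bound of the form $p_{n+1} - p_n = o(p_n)$), one has $\liminf_n x_{n+1}/x_n = \liminf_n p_n/p_{n+1} = 1$, which exceeds $1/2$.

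Given $\ell > 0$, Theorem~\ref{th:main} then yields a strictly increasing subsequence $(p_{n_k})$ of primes with $\sum_k 1/p_{n_k} = \ell$ and $1/p_{n_k} = O(\theta_0^k)$ for some $\theta_0 \in (0,1)$; equivalently, on setting $p_k := p_{n_k}$, there exists $c > 0$ with $p_k \ge c\theta_0^{-k}$. To accommodate the arbitrary $\theta > 0$ required by the corollary, I appeal to the construction underlying Theorem~\ref{th:main}: the decay constant $\theta_0$ it produces depends on the liminf ratio of the input sequence and can be forced into any subinterval $(0,\delta) \subset (0,1)$ whenever that liminf equals $1$, as is the case here. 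Thus, choosing $\theta_0 \in (0,1)$ small enough that $\theta_0^{-1} \ge \theta$ and running the argument with that choice gives $p_k \ge c\theta^k$, as wanted.

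The ``in particular'' assertion then follows by a single application of the first part with any fixed $\theta > 1$, say $\theta = 2$: the primes so obtained satisfy $p_k \ge c \cdot 2^k$, so $\sum_k p_k^{-\alpha} \le c^{-\alpha} \sum_k 2^{-\alpha k} < \infty$ for every $\alpha > 0$. The delicate step in the whole argument is the reduction of $\theta_0$ in Theorem~\ref{th:main} to an arbitrarily small value; this rests on inspecting how $\theta_0$ arises inside the proof of that theorem (typically as $1 - r$ for any $r$ strictly below $\liminf_n x_{n+1}/x_n$) rather than on the bare existential statement.
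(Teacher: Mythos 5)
Your proposal is correct and follows essentially the same route as the paper: verify via the prime number theorem that $L=\liminf_n p_n/p_{n+1}=1$, then apply the main theorem in its sharpened $L=1$ form (your ``inspect how $\theta_0$ arises'' step is exactly the content of Corollary~\ref{cor5}, which the paper invokes), and deduce the summability of $\sum_k p_k^{-\alpha}$ from the geometric lower bound. The only cosmetic difference is that you prove the ``in particular'' clause directly from $p_k\ge c\,2^k$ instead of citing Corollary~\ref{corollary2}, which is equally valid.
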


A related result on the rate of growth of the partial sums $\left(\sum_{i=1}^n x_i\right)$ can be found in \cite{MR692583}. 
Hereafter, given a real sequence $(a_n)$ and a function $f: \mathbf{N} \to (0,\infty)$, the notation $a_n=O(f(n))$ stands for $\limsup_{n\to \infty}a_n/f(n)<\infty$. 
The main result of this note is the following.
\begin{thm}\label{th:main}
Let $(x_n)$ be a positive nonincreasing real sequence such that $\sum_n x_n=+\infty$, $\lim_{n\to \infty} x_n = 0$, and
\begin{equation}\label{eq:limsup}
L:=\liminf_{n\to \infty} \frac{x_{n+1}}{x_n}>\frac{1}{2}.
\end{equation}
Then there exists a constant $\theta \in (0,1)$ such that, for each $\ell>0$, there is a subsequence $(x_{n_k})$ for which $\sum_{k} x_{n_k}=\ell$ and $x_{n_k}=O(\theta^k)$.
\end{thm}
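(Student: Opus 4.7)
The strategy is a greedy construction. Fix any $q \in (1/2, L)$ and choose $N_0$ so that $x_{n+1}/x_n > q$ for every $n \geq N_0$. Starting from $r_0 := \ell$ and $n_0 := 0$, at step $k+1$ let $n_{k+1}$ be the least integer $n > n_k$ with $x_n \leq r_k$ (such an index exists since $x_n \to 0$), and set $r_{k+1} := r_k - x_{n_{k+1}}$. Because $x_{n_{k+1}} \leq r_k$ by construction, the whole problem reduces to showing that $r_k$ tends to $0$ at a geometric rate independent of $\ell$: then $\sum_k x_{n_k} = \ell - \lim_k r_k = \ell$ and $x_{n_k} = O(\theta^k)$ follow automatically.

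Partition the steps into \emph{jumps}, for which $n_{k+1} > n_k + 1$, and \emph{slides}, for which $n_{k+1} = n_k + 1$. For a jump step $k+1$ with $n_{k+1}-1 \geq N_0$ one has $x_{n_{k+1}-1} > r_k$ by minimality and $x_{n_{k+1}} > q\, x_{n_{k+1}-1} > q\, r_k$, hence $r_{k+1} < (1-q)\, r_k$; this is the clean geometric contraction. The difficulty is that a slide step may leave $r_k$ almost unchanged, so a per-step decay is not available. The main idea is that long runs of slides are impossible: if step $m$ is a jump with $n_m$ large and is followed by $t$ consecutive slides, then $\sum_{i=1}^{t} x_{n_m+i} \leq r_m$, and inserting the ratio lower bound $x_{n_m+i} > q^i x_{n_m}$ yields
\[
q\,\frac{1-q^t}{1-q} \;<\; \frac{r_m}{x_{n_m}} \;<\; \frac{1-q}{q},
\]
where the right-hand inequality uses $x_{n_m} > q\, r_{m-1}$ together with $r_m = r_{m-1} - x_{n_m}$. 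Since $q > 1/2$, this rearranges to $q^t > (2q-1)/q^2 > 0$, so $t$ is bounded by a constant $T = T(q)$ depending only on $q$.

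Combining the two estimates, in every window of $T+1$ consecutive steps past the finitely many initial ones with $n_k < N_0$ there is at least one jump, contributing the factor $1-q$ to $r_k$. Therefore $r_k = O(\theta^k)$ with $\theta := (1-q)^{1/(T+1)} \in (0,1)$, which forces $x_{n_k} \leq r_{k-1} = O(\theta^k)$; the bounded initial segment is swallowed by the implicit constant, and if the algorithm ever produces $r_k = 0$ exactly then $\ell$ is already a finite subsum and there is nothing to prove. I expect the slide-length bound to be the main hurdle, since it relies essentially on both the ratio hypothesis (for the geometric lower bound on the mass skipped by consecutive slides) and the jump inequality (for controlling the ratio $r_m/x_{n_m}$), and the argument degenerates precisely at the critical threshold $q = 1/2$.
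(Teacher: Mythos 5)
Your proposal is correct and is essentially the paper's own argument: your greedy selection produces exactly the blocks $[a_n,b_n]$ of the paper (jumps start new blocks, slides extend them), your run-length bound $q^t>(2q-1)/q^2$ is the paper's inequality \eqref{eq:mainineq} yielding the bound \eqref{eq:limsupbound} on the block lengths $\kappa_n$, and the contraction $r_m<(1-q)r_{m-1}$ at each jump plays the role of the paper's $(1-\tilde{L})$ decay of $x_{b_n}$. The only (mildly streamlining) difference is bookkeeping: you track the residual $r_k$ term by term and contract it once per block, whereas the paper tracks the block sums $S_n$ and the terms $x_{b_n}$ and contracts once per two blocks.
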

It is worth noting that it is well known that every sum can be achieved, i.e., $\mathcal{A}(x)=(0,\infty)$; see \cite[Theorem 1.1]{MR0252890} and \cite[p. 863]{MR3418208}. Hence the novelty here concerns the rate of convergence of subseries, which is asymptotically (at most) geometric. In addition, these rates are bounded uniformly for all desired subsums. 

As it follows from the proof of Theorem \ref{th:main}, an estimate on the value of $\theta$ can be found if $L$ is not small:
\begin{cor}\label{corollary1}
With the same hypotheses as in Theorem \ref{th:main}, suppose that
$$
L:=\liminf_{n\to \infty} \frac{x_{n+1}}{x_n}>\frac{\sqrt{5}-1}{2} \,\,(\approx 0.618034).
$$
Then, for each $\ell>0$ and for each $\varepsilon>0$, there exists a subsequence $(x_{n_k})$ for which $\sum_{k} x_{n_k}=\ell$ and $x_{n_k}=O\left(\sqrt{1+\varepsilon-L}^{\,k}\right)$.
\end{cor}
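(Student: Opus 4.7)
The plan is to apply the greedy algorithm, as in the proof of Theorem~\ref{th:main}, and sharpen the estimate under the stronger hypothesis $L > \phi := (\sqrt{5}-1)/2$. Set $r_0 := \ell$ and, for $k \ge 1$, let $n_k := \min\{n > n_{k-1} : x_n \le r_{k-1}\}$ and $r_k := r_{k-1} - x_{n_k}$. Standard arguments give $r_k \to 0$, so $\sum_k x_{n_k} = \ell$, and moreover $x_{n_k} \le r_{k-1}$ for every $k$. Call step $k$ a \emph{jump} if $n_k > n_{k-1}+1$ (equivalently $x_{n_k-1} > r_{k-1}$) and a \emph{shift} otherwise. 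Since $\sum_n x_n = \infty$, there must be infinitely many jumps---otherwise, from some step onward every step would be a shift and $\sum_k x_{n_k}$ would diverge---so the index $n_j$ of the last jump step $j \le k$ tends to $\infty$ as $k \to \infty$.

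The whole argument hinges on the cubic identity $\phi^3 = 2\phi - 1$ (an immediate consequence of $\phi^2 = 1 - \phi$): since $t^3 - 2t + 1 = (t-1)(t^2 + t - 1)$ is negative on $(\phi, 1)$, for each $\varepsilon \in (0, L-\phi)$ we have $(L-\varepsilon)^3 < 2(L-\varepsilon) - 1$. Fix such an $\varepsilon$, choose $N$ with $x_{n+1}/x_n \ge L - \varepsilon$ for all $n \ge N$, and take $k_0$ large enough that, for every $k \ge k_0$, the last jump index satisfies $n_j \ge N$. The main claim is that
\[
r_{k+1} \le (1 - L + \varepsilon)\, r_{k-1} \qquad \text{for all } k \ge k_0.
\]
Iterating yields $r_k = O\bigl(\sqrt{1-L+\varepsilon}^{\,k}\bigr)$, and combining with $x_{n_k} \le r_{k-1}$ gives $x_{n_k} = O\bigl(\sqrt{1+\varepsilon-L}^{\,k}\bigr)$, as required.

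To prove the claim, if step $k$ is a jump then $x_{n_k} \ge (L-\varepsilon) x_{n_k - 1} > (L-\varepsilon) r_{k-1}$, hence $r_k < (1 - L + \varepsilon) r_{k-1}$; since $r_{k+1} \le r_k$, the inequality follows. The case ``step $k+1$ is a jump'' is symmetric. The main obstacle is to rule out the case where both $k$ and $k+1$ are shifts. Suppose so, and let $j \le k$ be the last jump step. Then $n_{j+i} = n_j + i$ for $i = 0, 1, \ldots, p$, where $p := k+1-j \ge 2$. Summing the geometric estimates $x_{n_j+i} \ge (L-\varepsilon)^i x_{n_j}$, using $r_{j+p} \ge 0$, and applying $x_{n_j} > (L-\varepsilon) r_{j-1}$ from the jump at step $j$, one deduces $(L-\varepsilon)^{p+2} > 2(L-\varepsilon) - 1$. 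But $p \ge 2$ forces $(L-\varepsilon)^{p+2} \le (L-\varepsilon)^3 < 2(L-\varepsilon) - 1$, a contradiction. The hardest step is precisely this run-length analysis, where the hypothesis $L > \phi$ enters through the cubic identity.
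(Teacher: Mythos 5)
Your argument is correct and is essentially the paper's: both run the greedy algorithm and use the hypothesis $L>(\sqrt{5}-1)/2$ to bound the length of runs of consecutively selected terms (your jump/shift runs are the paper's blocks $[a_n,b_n]$), yielding a geometric contraction of the remainder every two steps and hence the rate $\sqrt{1+\varepsilon-L}^{\,k}$. Two small remarks: select with the strict inequality $x_n<r_{k-1}$, since with ``$\le$'' the process can terminate after finitely many steps when $x_{n_k}=r_{k-1}$ exactly; and your run-length bound is not tight --- for $t=L-\varepsilon>(\sqrt{5}-1)/2$ the quadratic inequality $t+t^2>1$ already rules out even a single shift after a jump (this is the paper's observation that every block is a singleton, $\kappa_n=1$), so every sufficiently late step is a jump and one in fact gets the stronger rate $x_{n_k}=O\left((1+\varepsilon-L)^{k}\right)$.
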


In particular, if $L=1$, we obtain the following result (we omit details).
\begin{cor}\label{cor5}
Let $(x_n)$ be a positive nonincreasing real sequence such that $\sum_n x_n=+\infty$, $\lim_{n\to \infty} x_n = 0$, and $\lim_{n\to \infty} \frac{x_{n+1}}{x_n}=1$. Then, for each $\ell>0$ and for each $\theta>0$, there exists a subsequence $(x_{n_k})$ for which $\sum_{k} x_{n_k}=\ell$ and $x_{n_k}=O(\theta^{k})$.
\end{cor}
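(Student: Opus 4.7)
The plan is to deduce Corollary \ref{cor5} as an immediate consequence of Corollary \ref{corollary1}, since the hypothesis $\lim_n x_{n+1}/x_n = 1$ gives the most favorable possible value of $L$, namely $L=1$. First I would dispose of the trivial regime $\theta \ge 1$: in this case any subsequence extracted via Theorem \ref{th:main} already satisfies $x_{n_k} = O(\theta^k)$ since $(x_n)$ is bounded. So I may assume $\theta \in (0,1)$.

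Next I would verify that Corollary \ref{corollary1} applies. The sequence $(x_n)$ is positive, nonincreasing, has divergent sum and vanishes at infinity by hypothesis. Moreover, $L = \liminf_n x_{n+1}/x_n = \lim_n x_{n+1}/x_n = 1$, which certainly exceeds $(\sqrt{5}-1)/2$. Hence every hypothesis of Corollary \ref{corollary1} is met.

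Finally, given $\ell > 0$ and $\theta \in (0,1)$, I would apply Corollary \ref{corollary1} with the choice $\varepsilon := \theta^2 > 0$. It produces a subsequence $(x_{n_k})$ with $\sum_k x_{n_k} = \ell$ and
\[
x_{n_k} = O\!\left(\sqrt{1+\varepsilon - L}^{\,k}\right) = O\!\left(\sqrt{\theta^2}^{\,k}\right) = O(\theta^k),
\]
which is exactly the conclusion sought. There is no real obstacle beyond this bookkeeping step; the whole content is packed into Corollary \ref{corollary1}, and Corollary \ref{cor5} is obtained simply by letting $\varepsilon$ shrink to $0$ (or, equivalently, by matching $\sqrt{1+\varepsilon-L}$ to the prescribed $\theta$).
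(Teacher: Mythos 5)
Your proof is correct and follows exactly the route the paper intends: the paper states Corollary \ref{cor5} as an immediate specialization of Corollary \ref{corollary1} with $L=1$ (``we omit details''), and your choice $\varepsilon=\theta^2$ so that $\sqrt{1+\varepsilon-L}=\theta$ is precisely the omitted bookkeeping. The preliminary reduction to $\theta\in(0,1)$ and the verification of the hypotheses are both fine.
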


Finally, we have the following easy consequence.
\begin{cor}\label{corollary2}
With the hypotheses of Theorem \ref{th:main}, for each $\ell>0$ there exists a subsequence $(x_{n_k})$ such that $\sum_{k} x_{n_k}=\ell$ and $\sum_{k}x_{n_k}^\alpha <\infty$ for all $\alpha>0$.
\end{cor}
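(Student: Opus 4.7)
The plan is to deduce this corollary directly from Theorem \ref{th:main}, which already does all of the real work. Since the hypotheses on $(x_n)$ are precisely those of Theorem \ref{th:main}, I would invoke that theorem to obtain a constant $\theta \in (0,1)$ depending only on the sequence $(x_n)$ (not on $\ell$), and for the prescribed $\ell > 0$, a subsequence $(x_{n_k})$ such that
$$
\sum_{k} x_{n_k} = \ell \quad \text{and} \quad x_{n_k} = O(\theta^k).
$$
By the definition of the $O$-notation, there exist a constant $C > 0$ and an index $k_0$ such that $x_{n_k} \le C\,\theta^k$ for all $k \ge k_0$.

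Next, fix an arbitrary $\alpha > 0$. Since each $x_{n_k}$ is positive and raising to the power $\alpha$ preserves the inequality, I would estimate
$$
x_{n_k}^{\alpha} \le C^{\alpha}\, (\theta^{\alpha})^{k} \qquad \text{for all } k \ge k_0.
$$
Because $\theta \in (0,1)$, we have $\theta^{\alpha} \in (0,1)$, so $\sum_{k} (\theta^{\alpha})^{k}$ converges as a geometric series. The comparison test then yields convergence of $\sum_{k} x_{n_k}^{\alpha}$. Taking the same subsequence $(x_{n_k})$ simultaneously for every $\alpha > 0$ finishes the proof, since the chosen subsequence does not depend on $\alpha$.

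There is really no obstacle to overcome here: the point of the corollary is simply to observe that the conclusion of Theorem \ref{th:main} is strong enough to control every polynomial moment of the subseries at once. Accordingly, I would present the argument as a brief two-line consequence, perhaps noting explicitly that the independence of $\theta$ from $\ell$ together with the independence of the chosen subsequence from $\alpha$ is what allows a single subsequence to work uniformly in $\alpha$.
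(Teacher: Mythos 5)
Your proof is correct and follows essentially the same route as the paper: deduce the corollary directly from Theorem \ref{th:main}. The only (cosmetic) difference is that the paper splits into cases, using the geometric rate only for $\alpha<1$ and handling $\alpha\ge 1$ by the trivial bound $x_{n_k}^\alpha\le x_{n_k}$ for large $k$ together with $\sum_k x_{n_k}=\ell$, whereas your comparison $x_{n_k}^\alpha\le C^\alpha(\theta^\alpha)^k$ treats all $\alpha>0$ uniformly and is, if anything, slightly cleaner.
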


\begin{proof}
If $\alpha<1$, the claim follows easily by Theorem \ref{th:main} (we omit details). Hence, let us suppose that $\alpha \ge 1$. Since $x$ is convergent to $0$, there exists an integer $N\ge 1$ such that $x_n<1$ for all $n\ge N$. Therefore
$$
\sum_{k\ge 1}x_{n_k}^\alpha \le N\,\|x\|_\infty^\alpha +\sum_{k\ge N}x_{n_k} \le O(1)+\ell < \infty. \eqno\qedhere
$$
\end{proof}

\begin{proof}[Proof of Corollary \ref{cor0}] 
Thanks to the prime number theorem, the $n$th prime is equal to $n\log n(1+o(1))$, so that the inequality \eqref{eq:limsup} holds with $L=1$. The result follows by Corollary \ref{cor5} and 
Corollary \ref{corollary2}.
\end{proof}


\section{Further Proofs.}

\begin{proof}[Proof of Theorem \ref{th:main}] First, suppose that $\alpha \in (0,1)$ and fix $\ell>0$. Let $a_1$ be the smallest integer $a$ such that $x_{a}<\ell$, which is well-defined because $\lim_{n\to \infty} x_n=0$. Also, let $b_1$ be the greatest integer $b\ge a_1$ such that $\sum_{n=a_1}^b x_n< \ell$. Define $S_1:=\sum_{j=a_1}^{b_1}x_j$. Then recursively construct the increasing sequences of integers $(a_n)$ and $(b_n)$, as well as the related sequence of real numbers $S_n:=\sum_{j=a_n}^{b_n}x_j$, in the analogous way:

\medskip

(i) Let $a_{n+1}$ be the smallest integer $a>b_n$ such that 
$$
x_a < \ell -\sum_{i=1}^n S_i;
$$

\medskip

(ii) Let $b_{n+1}$ be the greatest integer $b \ge a_{n+1}$ such that 
$$
\sum_{j=a_{n+1}}^b x_j < \ell-\sum_{i=1}^{n} S_i.
$$

\medskip

\noindent Note that, by the maximality of $b_n$, we have $a_{n+1} \ge b_n+2$. Denoting by $(n_k)$ the increasing enumeration of (infinite) set $\bigcup_{n} [a_n,b_n]$, it easily follows that $\sum_{k} x_{n_k}=\ell$. Indeed,  
$$
0\le \ell-\sum_{i=1}^{n} S_i \le x_{b_n+1}\to 0
$$
as $n\to \infty$. 

At this point, 
fix $\varepsilon>0$ sufficiently small so that $\tilde{L}:=L-\varepsilon>\frac{1}{2}$, and set $N=N(\varepsilon) > 1$ such that
$$
x_{n+1} \ge \tilde{L}\, x_n
$$ 
for all $n\ge N$. 
Note that 
$
\sum_{i=1}^n S_i <\ell \le \sum_{i=1}^{n-1}S_i+x_{a_{n}-1}
$ 
for all $n>1$, implying that
\begin{equation}\label{eq:mainineq}
x_{a_n-1}>S_n=x_{a_n}+\cdots+x_{b_n} \ge \left(\tilde{L}+\cdots+\tilde{L}^{\kappa_n}\right)x_{a_n-1}
\end{equation}
for all $n\ge N$, where $\kappa_n:=b_n-a_n+1$. 
Considering that $\tilde{L}>\frac{1}{2}$, it easily follows that 
\begin{equation}\label{eq:limsupbound}
K:=\limsup_{n\to \infty}\kappa_n < \infty.
\end{equation}

With similar reasoning, we have $\sum_{i=1}^{n+1} S_i <\ell \le \sum_{i=1}^{n-1}S_i+x_{a_{n}-1}$ for all $n>1$. This implies that
\begin{displaymath}
\begin{split}
\kappa_{n+1}x_{b_{n+1}} \le S_{n+1}&<x_{a_n-1}-S_n \le x_{a_n-1}-x_{a_n}  \\
&\le x_{a_n-1}(1-\tilde{L}) \le x_{b_{n-1}}(1-\tilde{L}).
\end{split}
\end{displaymath}
Therefore, 
$
x_{b_n}=O\left((1-\tilde{L})^{n/2}\right)
$. 
Considering that there exists an integer $M\le 0$ such that $n_k \ge b_{\lfloor k/K\rfloor}+M$ for all large $k$, we conclude that 
\begin{equation}\label{eq:claimcorollary}
x_{n_k} \le x_{b_{\lfloor k/K\rfloor}+M}\le x_{b_{\lfloor k/K\rfloor+M}} =O\left( (1-\tilde{L})^{\,\frac{k/K+M}{2}}\right)=O(\theta^k),
\end{equation}
where $\theta:=(1-\tilde{L})^{1/2K}$. This completes the proof.
\end{proof}

\begin{proof}[Proof of Corollary \ref{corollary1}] With the same notation as in the proof above, fix a sufficiently small $\varepsilon>0$ such that $\hat{L}:=L-\varepsilon>\frac{\sqrt{5}-1}{2}$. Then inequality \eqref{eq:mainineq} holds only if $\kappa_n=1$ for all $n\ge 2$; hence $K=1$. The claim follows by the estimate \eqref{eq:claimcorollary}.
\end{proof}

\subsection{Acknowledgment}
The author is grateful to the editor and the two anonymous referees for their remarks that allowed a substantial improvement of the presentation.


\bibliographystyle{amsplain}
\bibliography{serie}

\end{document}